\theoremstyle{plain}
\newtheorem*{BI}{Theorem A}
\newtheorem*{HL}{Theorem B}
\newtheorem{lemma}{Lemma}
\newtheorem{theorem}{Theorem}
\newtheorem{prop}{Proposition}
\newtheorem{cor}{Corollary}
\theoremstyle{definition}
\newtheorem{defn}{Definition}
\newtheorem{rmk}{Remark}
\newtheorem{eg}{Example}
\def\thm@space@setup{
  \thm@preskip=\parskip \thm@postskip=0pt
}
\title{Banach's Indicatrix Reloaded}
\author{Ikemefuna Agbanusi}
\date{}                                           
\newcommand{\paren}[1]{\left(#1\right)}
\newcommand{\abs}[1]{\left|#1\right|}
\newcommand{\brac}[1]{\left[#1\right]}
\newcommand{\disp}{\displaystyle}
\renewcommand{\vec}[1]{\mathbf{#1}}
\DeclareMathOperator{\dist}{dist}
\DeclareMathOperator{\mes}{meas}
\def\N{\mathbb{N}} 
\def\R{\mathbb{R}}
\begin{document}
\maketitle

\begin{abstract} 
Banach famously related the smoothness of a function to the size of its level sets. More precisely, he showed that a continuous function is of bounded variation exactly when its ``indicatrix" is integrable. In a similar vein, we connect the smoothness of a function---measured now by its integral modulus of continuity---to the structure of its \emph{superlevel} sets. Our approach ultimately rests on a continuum incidence problem for  quantifying the regularity of open sets.
The pay off is a refinement of Banach's original theorem and an answer to a question of Garsia--Sawyer.
\end{abstract}

\section{Background}
The purpose of this paper is to further study the relationship between the level sets of a continuous function and its $L^p$ modulus of continuity. The genesis of this problem is Banach's ``indicatrix" theorem which we now recall.

Let $f(x)$ be a continuous 1-periodic real valued function normalized to satisfy $0\leq f(x)\leq1$. Let $n(y)$ count the number (possibly infinite) of solutions to $f(x)=y$. Following {\sc Natanson}  \cite[p.\,225]{Natanson:1955bl}, we refer to the function $n(y)$ as the \emph{Banach indicatrix}. It was introduced by {\sc Banach} \cite{Banach:1925to} nearly a century ago and used to establish the following remarkable result.
\begin{BI}[Banach's Indicatrix Theorem]\label{thm:BI}
A continuous function is of bounded variation if and only if  $n(y)$ is Lebesgue integrable. 
\end{BI}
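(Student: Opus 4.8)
The plan is to prove the sharper, quantitative identity
\[
\int_0^1 n(y)\,dy = V(f),
\]
where $V(f)=\sup_{P}\sum_i\abs{f(x_i)-f(x_{i-1})}$ is the total variation of $f$ over one period (the supremum running over partitions $P:0=x_0<\dots<x_m=1$); the stated equivalence is then immediate, since the right side is finite exactly when $f$ is of bounded variation. The engine is a discretization of the variation by \emph{oscillations} rather than signed increments, coupled with the monotone convergence theorem.

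First I would fix the dyadic partitions $\pi_k$ of $[0,1]$ into $2^k$ equal subintervals $I_{k,i}$, and set $m_{k,i}=\min_{I_{k,i}}f$ and $M_{k,i}=\max_{I_{k,i}}f$, both finite by continuity and compactness. The approximating counts
\[
\mu_k(y)=\#\brak{\,i:\ m_{k,i}\le y\le M_{k,i}\,}=\sum_{i=1}^{2^k}\mathbf{1}_{[m_{k,i},M_{k,i}]}(y)
\]
are finite sums of indicators of intervals, hence Borel measurable, with
\[
\int_0^1\mu_k(y)\,dy=\sum_{i=1}^{2^k}\paren{M_{k,i}-m_{k,i}}.
\]
The two analytic inputs are then: (i) the oscillation sums increase monotonically to $V(f)$ --- monotone because oscillation is subadditive over adjacent intervals, so passing to $\pi_{k+1}$ never decreases the sum, and with limit $V(f)$ because $\sum_i\abs{f(x_i)-f(x_{i-1})}\le\sum_i(M_{k,i}-m_{k,i})\le V(f)$ while, by uniform continuity, the dyadic sums exhaust the supremum defining $V(f)$; and (ii) $\mu_k(y)\uparrow n(y)$ for every $y$ outside the countable (hence null) set $\brak{f(x):x\ \text{dyadic}}$. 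Granting these, monotone convergence yields $\int_0^1 n=\lim_k\int_0^1\mu_k=V(f)$, finishing the proof.

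The main work --- and the step I expect to be the real obstacle --- is (ii). Monotonicity is clean: when $I_{k,i}$ splits into two children, their $f$-ranges are subintervals sharing the value of $f$ at the common dyadic endpoint, so their union is exactly $[m_{k,i},M_{k,i}]$, and any $y$ counted once on the parent is counted at least once among the children. The bound $\mu_k(y)\le n(y)$ is the intermediate value theorem: each interval with $m_{k,i}\le y\le M_{k,i}$ harbors a genuine solution of $f(x)=y$, the only danger being a solution sitting at a shared dyadic endpoint and thus double-counted --- which is precisely why I excise the values $\brak{f(x):x\ \text{dyadic}}$. For the reverse inequality, given any integer $N\le n(y)$ and distinct solutions $t_1<\dots<t_N$, once $2^{-k}$ is below the minimal gap each $t_j$ occupies its own subinterval and forces $m_{k,i}\le y\le M_{k,i}$ there, so $\mu_k(y)\ge N$; letting $N\uparrow n(y)$ (including the case $n(y)=\infty$) closes the loop. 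The subtle point I would stress is that, because $\mu_k$ is built from the \emph{closed range} $[m_{k,i},M_{k,i}]$ rather than from the signed increment $f(x_i)-f(x_{i-1})$, it detects \emph{tangential} solutions (local extrema at height $y$) exactly as well as transversal crossings; this is what makes the monotone limit the full indicatrix $n(y)$ and not merely its crossing part, and it is the delicate feature that must be handled correctly for the identity --- and hence Banach's theorem --- to hold.
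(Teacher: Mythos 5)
Your proposal is correct, but note that the paper never proves Theorem A at all: it is quoted as background and the proofs are delegated to the cited references (Banach, Saks, Federer, Cesari), so there is no in-paper argument to compare against. What you have reconstructed is essentially the classical proof found in those references (Banach's original, in the form presented by Natanson): the dyadic counting functions $\mu_k(y)=\sum_i\mathbf{1}_{[m_{k,i},M_{k,i}]}(y)$ are exactly the approximating indicatrices used there, and the two pillars --- that $\int_0^1\mu_k\,dy$ equals the oscillation sum which increases to $V(f)$, and that $\mu_k\uparrow n$ off the countable set of values of $f$ at dyadic points --- combine via monotone convergence to give the sharp identity $\int_0^1 n(y)\,dy=V(f)$, which is stronger than the stated equivalence. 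Your handling of the two delicate points is right: subadditivity of oscillation over adjacent intervals (both children's ranges contain $f$ at the shared endpoint) gives monotonicity in $k$, and excising the null set $\{f(x):x\text{ dyadic}\}$ prevents double-counting solutions sitting at partition points. The only places that deserve fleshing out are (a) the claim that the dyadic oscillation sums exhaust $V(f)$, which needs the standard mesh-refinement argument: given a partition $P$, take $2^{-k}$ below the minimal gap of $P$ and use uniform continuity to slide each point of $P$ to a nearby dyadic point at cost $\epsilon$; and (b) the measurability of $n$ itself, which you get for free as the a.e.\ pointwise limit of the Borel functions $\mu_k$. It is worth observing that the paper's own machinery gives a genuinely different proof of one direction only: Proposition \ref{prop:main_reduc} together with the bound $\tau(h,E_y)\leq 2hN(E_y)$ and $n(y)=2N(E_y)$ a.e.\ yields $\omega(f,t)_1\leq t\int_0^1 n(y)\,dy$, so integrability of $n$ implies bounded variation via Theorem B; the converse direction, which your identity delivers simultaneously, is not accessible by that route.
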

Recall that a function is said to be of \emph{bounded variation}, or simply BV, if
\[
V(f):=\sup_\mathcal{P}\sum_k|f(x_{k+1})-f(x_k)|<\infty.
\]
Here the supremum is taken over all finite partitions or subdivisions of $[0,1]$.
Proofs of the indicatrix theorem and basic facts about BV functions can also be found in {\sc Saks} \cite[p.\,280]{Saks:1937wo},  {\sc Federer} \cite[p.\,423--426]{Federer:1944gn} and {\sc Cesari} \cite[p.\,321]{Cesari:1958iz}.

Shortly after, {\sc Hardy--Littlewood} \cite[p.\,619]{Hardy:1928hw} gave a different characterization of BV functions. Theirs uses the $L^p$ modulus of continuity defined by
\[\omega(f,t)_p:=\sup_{|h|\leq t}\paren{\int_0^1|f(x+h)-f(x)|^p\,dx}^\frac1p.\] 
\begin{HL}[Hardy--Littlewood Characterization]
A measurable function $f$ is of bounded variation if and only if $\omega(f,t)_1=O(t)$.
\end{HL}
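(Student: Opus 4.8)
The plan is to prove the two implications separately, in each case passing through the derivative measure associated to a function of bounded variation.

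\textbf{Forward direction ($BV \Rightarrow O(t)$).} Suppose first that $f$ has bounded variation, and (choosing the canonical representative) let its distributional derivative be a finite signed $1$-periodic measure $\mu$, with total mass $V:=\abs{\mu}([0,1))$ over one period. For $0<h<1$ one has $f(x+h)-f(x)=\mu([x,x+h))$, so that
\[
\int_0^1 \abs{f(x+h)-f(x)}\,dx \le \int_0^1 \abs{\mu}\paren{[x,x+h)}\,dx.
\]
I would then interchange the order of integration by Fubini's theorem: the right-hand side equals $\int \paren{\int_0^1 \mathbf{1}_{[x,x+h)}(t)\,dx}\,d\abs{\mu}(t)$, and the inner integral is exactly $h$ by periodicity. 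This yields $\omega(f,t)_1 \le V\,t$, i.e. $\omega(f,t)_1 = O(t)$.

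\textbf{Reverse direction ($O(t)\Rightarrow BV$).} This is the substance of the statement. Assume $\int_0^1 \abs{f(x+h)-f(x)}\,dx \le C\abs{h}$ for all $h$. The natural idea is to recover the derivative as a limit of difference quotients $g_h(x)=\paren{f(x+h)-f(x)}/h$. The hypothesis says precisely that the measures $d\mu_h := g_h\,dx$ have total variation bounded \emph{uniformly} in $h$, namely $\norm{\mu_h}\le C$. Here is the crux. Since $L^1$ is not a dual space, the $g_h$ need not converge weakly in $L^1$; the remedy is to view $\mu_h$ as elements of the space of finite measures $\mathcal{M}=C(\mathbb{R}/\mathbb{Z})^{*}$ and invoke weak-$*$ (Banach--Alaoglu) compactness. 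I would extract a sequence $h_n\to 0$ with $\mu_{h_n}\to\mu$ weak-$*$ for some finite $\mu$, with $\norm{\mu}\le C$ by lower semicontinuity of the norm. It then remains to identify $\mu$: for any smooth periodic $\varphi$, a change of variables gives
\[
\int \varphi\,d\mu_{h_n} = \int f(x)\,\frac{\varphi(x-h_n)-\varphi(x)}{h_n}\,dx \longrightarrow -\int f\varphi'\,dx,
\]
while the left side tends to $\int \varphi\,d\mu$ by weak-$*$ convergence. Hence $\int \varphi\,d\mu = -\int f\varphi'\,dx$ for every test function $\varphi$, so $\mu$ is the distributional derivative of $f$; being a finite measure, it forces $f$ to agree almost everywhere with a function of bounded variation.

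The main obstacle is exactly the failure of weak sequential compactness in $L^1$ flagged above: one cannot stay inside $L^1$ and must instead work in the larger space $\mathcal{M}$, where the uniform total-variation bound does furnish compactness. A secondary point to handle carefully is the meaning of ``bounded variation'' for a merely measurable $f$---namely finiteness of the essential variation, equivalently that the distributional derivative be a finite measure---since the pointwise variation is not well defined on an equivalence class. Once these are in place the two estimates are mirror images of one another, with the single constant $V$ controlling both.
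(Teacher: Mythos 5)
Your proof is correct, but there is nothing in the paper to compare it against: Theorem B is quoted as a known result of Hardy--Littlewood \cite[p.\,619]{Hardy:1928hw} and never proved there---the paper's machinery (Proposition \ref{prop:main_reduc}, Lemma \ref{lem:main_lem}) is aimed at refinements of it, not at the theorem itself. So your argument serves as a self-contained substitute for the citation, and it is the standard modern one; both halves go through. The forward direction is fine: Tonelli gives $\int_0^1 |\mu|\paren{[x,x+h)}\,dx = h\,|\mu|(\mathbb{R}/\mathbb{Z})$ exactly as you say, yielding $\omega(f,t)_1\le Vt$. The reverse direction via weak-$*$ compactness in $\mathcal{M}=C(\mathbb{R}/\mathbb{Z})^*$ and identification of the limit as the distributional derivative is also sound, but two steps deserve a line each if you write it up. First, splitting $\int\varphi(x)\paren{f(x+h_n)-f(x)}\,dx$ into two integrals requires $f\in L^1$, which is not part of the hypothesis ``measurable''; it does, however, follow from the hypothesis: integrating the assumed bound $\int_0^1|f(x+h)-f(x)|\,dx\le Ch$ over $h\in(0,\delta)$ and applying Tonelli shows $f$ is integrable near almost every point, hence on the circle. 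Second, the closing assertion that a finite-measure distributional derivative forces $f$ to agree a.e.\ with a BV function should be justified rather than asserted: set $g(x)=\mu([0,x))$, note $g$ has pointwise variation at most $\norm{\mu}$, and observe $D(f-g)=0$, so $f-g$ is a.e.\ constant. For contrast, the classical route (closer to what Hardy--Littlewood had available) avoids Banach--Alaoglu by mollifying: the smoothed functions $f_\epsilon$ satisfy $\int|f_\epsilon'|\le C$, hence have variation at most $C$, and Helly's selection theorem produces a BV limit equal to $f$ a.e. Your measure-theoretic version buys brevity and the clean bound $\norm{\mu}\le C$ from weak-$*$ lower semicontinuity; the classical version stays inside real-variable theory.
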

These theorems show that the conditions $n(y)\in L^1([0,1])$ and $\omega(f,t)_1=O(t)$ are actually equivalent for continuous functions. They also hint at an underlying link between the modulus of continuity and the indicatrix, so it is natural to wonder how deep the connection goes. In particular, what can be said about  $\omega(f,t)_1$ when $n(y)\notin L^1([0,1])$? This puzzle is our main preoccupation.

Similar questions were  also raised by {\sc Garsia--Sawyer} \cite[p.\,589--591]{Garsia:1964ss}. They pointed out (without proof) that 
\begin{equation}\label{eqn:GS1}
\int_0^1\log n(y)\,dy<\infty\text{\,\,implies that\,\,} \omega(f,t)_1=O\paren{|\log t|^{-1}},
\end{equation}
whereas
\begin{equation}\label{eqn:GS2}
\int_0^1(\log n(y))^{\frac12}\,dy<\infty\text{\,\,implies that\,\,} \omega(f,t)_1=O\big(|\log t|^{-\frac12}\big).
\end{equation}
As reported in \cite{Garsia:1964ss}, earlier {\sc Beurling} (unpublished) had shown that $(n(y))^\frac1p\in L^1$ implies that $f$ has bounded $p$-variation when $p>1$. A corollary (see Remark \ref{rmk:alt_BVp})  is the fact that
\begin{equation}\label{eqn:indicatrix_BVp}
\int_0^1 (n(y))^\frac1p\,dy<\infty\text{\,\,implies\,\,} \omega(f,t)_p=O(t^\frac1p).
\end{equation}
Our contribution here is a unified and elementary approach to these results. It turns out they are special cases of a theorem relating the $L^p$ modulus of continuity and the geometry of \emph{superlevel} sets. This story is outlined next.
\section{Results}
First we establish some notation. For fixed $y$, define the \emph{superlevel} set $E_y=\{x:f(x)>y\}$.
Since $f$ is continuous and 1-periodic, $E_y$ is actually an open subset of the unit circle. We typically identify the circle with the interval $[0,1)$ having its endpoints ``glued". By a standard theorem, $E_y$ is at most a countable union, $\bigcup_k I_{y,k}$, of disjoint open intervals (arcs), $I_{y,k}$. Throughout, $l_{y,k}$ refers to the length of the intervals $I_{y,k}$ in $E_y$.

{\sc Garsia--Sawyer} \cite[p.\,589]{Garsia:1964ss} introduced the function $N(E_y)$ which counts the number of intervals in this collection. The indicatrices, $n(y)$ and $N(E_y)$, are virtually equivalent since $n(y)=2N(E_y)$ for almost every $y\in[0,1]$ as shown in \cite[p.\,595]{Garsia:1964ss}. We switch between the two without much ado.

Our generalization of \eqref{eqn:GS1} and \eqref{eqn:GS2} involves a certain class of functions. To our knowledge, this class has not been singled out in the literature, but it appears related to the ``gauge functions" which define Hausdorff measures.

\begin{defn}
The class $\Phi$ consists of functions, $\phi: (0,1]\to\R$, satisfying
\begin{enumerate}[(i)]
\item $\phi(t)$ is non-increasing 
\item  $t\phi(t)$ is non-decreasing, bounded and concave on some interval $(0,\frac1c)$.
\end{enumerate} 
\end{defn}
Examples of functions in $\Phi$ are $t^{-\alpha}$ and $(\log\frac1t)^\alpha$ with $0\leq\alpha\leq1$. Another is 
$t^{-\alpha}\exp(\gamma|\log t|^\beta)$ with $0\leq\alpha,\beta<1$ and $\gamma>0$.  One can cook up more complicated examples combining negative powers of $t$, logarithms, iterated logarithms, and so on. 

With all the pieces in place, we can finally state something precise.

\begin{theorem}\label{thm:intro_thm}
If $\phi(t)\in\Phi$, $\int_0^1\sum_kl_{y,k}\phi(l_{y,k})\,dy<\infty, \text{\,\,then\,\,\,}\omega(f,t)_1=O\paren{1\big/\phi(t)}$. In particular, $\int_0^1|E_y|\,\phi\paren{|E_y|\big/N(E_y)}\,dy<\infty$ implies $\omega(f,t)_1=O\paren{1\big/\phi(t)}$.
\end{theorem}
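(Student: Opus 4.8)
The plan is to pass from the analytic quantity $\omega(f,t)_1$ to the geometry of the sets $E_y$ by a coarea/layer-cake identity, and then to extract the conclusion from the two defining properties of $\Phi$ via a single pointwise inequality.

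First I would use $0\le f\le 1$ to write the layer-cake representation $f(x)=\int_0^1\mathbf{1}_{E_y}(x)\,dy$. For fixed $x$ and a shift $h$, the indicators $\mathbf{1}_{E_y}(x)$ and $\mathbf{1}_{E_y}(x+h)$ disagree exactly for those $y$ lying between $f(x)$ and $f(x+h)$, so $\int_0^1|\mathbf{1}_{E_y}(x+h)-\mathbf{1}_{E_y}(x)|\,dy=|f(x+h)-f(x)|$. Integrating in $x$ and applying Fubini yields the clean identity
$$\int_0^1|f(x+h)-f(x)|\,dx=\int_0^1|E_y\triangle(E_y-h)|\,dy,$$
where $E_y-h$ is the translate on the circle. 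This is where the problem turns geometric: the modulus of continuity is the average, over all heights $y$, of how much each superlevel set is displaced by a rotation through $h$.

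Next I would estimate the symmetric difference. Since $E_y=\bigsqcup_k I_{y,k}$ is a disjoint union of arcs and translation preserves disjointness, $\mathbf{1}_{E_y}-\mathbf{1}_{E_y-h}=\sum_k(\mathbf{1}_{I_{y,k}}-\mathbf{1}_{I_{y,k}-h})$, so $|E_y\triangle(E_y-h)|\le\sum_k|I_{y,k}\triangle(I_{y,k}-h)|$. A single arc of length $l$ contributes at most $2\min(|h|,l)$, since each of its two endpoints is displaced by $|h|$ unless the whole arc is shorter. Because $\min(|h|,l)$ is non-decreasing in $|h|$, taking the supremum over $|h|\le t$ gives
$$\omega(f,t)_1\le 2\int_0^1\sum_k\min(t,l_{y,k})\,dy.$$
This geometric bound — controlling the displaced measure by summing endpoint contributions — is the ``continuum incidence'' step, and I expect the care here (overlaps of translated arcs, wrap-around on the circle, and the case of countably many components) to be the main technical obstacle, even though the resulting inequality is one-sided and hence forgiving.

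The final step is purely about the class $\Phi$, and it uses both defining properties at once. The key claim is the pointwise inequality
$$\min(t,l)\,\phi(t)\le l\,\phi(l)\qquad\text{for all }l,t\in(0,1].$$
When $l\le t$ this reads $\phi(t)\le\phi(l)$ and follows from $\phi$ being non-increasing; when $l>t$ it reads $t\phi(t)\le l\phi(l)$ and follows from $s\mapsto s\phi(s)$ being non-decreasing. Dividing by $\phi(t)$, summing, and combining with the previous display gives $\omega(f,t)_1\le 2\phi(t)^{-1}\int_0^1\sum_k l_{y,k}\phi(l_{y,k})\,dy=O(1/\phi(t))$, which is the first assertion. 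For the ``in particular'' clause I would invoke concavity of $g(s)=s\phi(s)$: by Jensen's inequality applied to the uniform average over the $N(E_y)$ components,
$$\frac{1}{N(E_y)}\sum_k g(l_{y,k})\le g\!\left(\frac{1}{N(E_y)}\sum_k l_{y,k}\right),$$
i.e. $\sum_k l_{y,k}\phi(l_{y,k})\le|E_y|\,\phi(|E_y|/N(E_y))$, so finiteness of $\int_0^1|E_y|\,\phi(|E_y|/N(E_y))\,dy$ forces the hypothesis of the first part. The only delicate points here are the heights $y$ with $N(E_y)=\infty$ and the restriction of $g$ to the interval where it is concave, which I would handle by truncation and by checking that such $y$ form a null set once the integral converges.
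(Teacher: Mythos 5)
Your architecture mirrors the paper's exactly: your Fubini identity is the $p=1$ case of Proposition \ref{prop:main_reduc} (sharpened to an equality), your per-arc bound $2\min(t,l)$ is precisely Lemma \ref{lem:main_lem}, and your Jensen argument for the ``in particular'' clause is the one the paper uses. The gap is your key pointwise claim, $\min(t,l)\,\phi(t)\le l\,\phi(l)$ for \emph{all} $l,t\in(0,1]$. Its proof in the case $l>t$ needs $s\mapsto s\phi(s)$ to be non-decreasing on all of $(0,1]$, but the definition of $\Phi$ grants this only on some interval $(0,\tfrac1c)$ --- and this is not a pedantic distinction, since even the paper's flagship example $\phi(t)=\log\tfrac1t$ (the Garsia--Sawyer case \eqref{eqn:GS1}) violates it: $s\log\tfrac1s$ strictly decreases on $(\tfrac1e,1]$ and vanishes at $s=1$, so for a component of length $l$ close to $1$ your inequality fails at moderate $t$. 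Worse, the failure can destroy your final bound, not just its proof. Take $\phi(t)=1/t$ on $(0,\tfrac12]$ and $\phi(t)=1/(2t^2)$ on $(\tfrac12,1]$ (this lies in $\Phi$ with $c=2$, since $t\phi(t)\equiv1$ below $\tfrac12$), and take $f$ trapezoidal: equal to $1$ on an arc of length $0.9$ and dropping linearly to $0$ at a single point. Then every $E_y$ is one arc of length $l_y\in(0.9,1)$, a direct computation gives $\omega(f,t)_1=2t-O(t^2)$, while your bound $2\phi(t)^{-1}\int_0^1\sum_k l_{y,k}\phi(l_{y,k})\,dy\approx 1.05\,t$; so the inequality you derive is false for all small $t$, not merely unproven.

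The repair is exactly the paper's proof of Theorem \ref{thm:main_thm}, and it hinges on the one property of $\Phi$ your argument never invokes: boundedness of $s\phi(s)$ near $0$. Split the components three ways: $l_k<t$, then $t\le l_k<\tfrac1c$, and finally $l_k\ge\tfrac1c$. Your argument is valid for the first two ranges. For the third, there are at most $c$ such components (disjoint arcs of length $\ge\tfrac1c$ in a circle of circumference $1$), and each contributes $\min(t,l_k)\phi(t)\le t\phi(t)\le B$; hence
\[
\sum_k\min(t,l_k)\le\frac{1}{\phi(t)}\Big(cB+\sum_k l_k\phi(l_k)\Big),
\]
which still yields $\omega(f,t)_1=O(1/\phi(t))$, only with a constant depending on $\phi$ as well as on the integral. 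Note also that the same domain restriction infects the Jensen step --- concavity of $s\phi(s)$ is likewise guaranteed only on $(0,\tfrac1c)$ --- a point you flag (truncation) but do not resolve; the paper is equally brisk there, so I only record that the long components must again be separated out before Jensen is applied.
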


Note that the choice $\phi(t)=\frac1t$ in the second half of Theorem \ref{thm:intro_thm} yields one of the implications in Banach's original theorem, while $\phi(t)=\log\frac1t$ and $\phi(t)=(\log\frac1t)^\frac12$ recover  Garsia and Sawyer's observations \eqref{eqn:GS1} and \eqref{eqn:GS2} respectively. Moreover, a careful parsing of Theorem \ref{thm:intro_thm} shows that it provides much weaker sufficient conditions for the conclusions of \eqref{eqn:GS1} and \eqref{eqn:GS2} to hold.

It is also  worth pointing out that the sum, $\sum_k l_k\phi(l_k)$, appearing in Theorem \ref{thm:intro_thm} actually encodes some of the geometry of the open set $E=\bigcup I_k$.  For example, if $\phi(t)=1$, the sum gives  the measure $|E|$.  With $\phi(t)=\frac1t$, it yields $N(E)$, the number of intervals in $E$.  For $\phi(t)=\log\frac1t$, the sum corresponds to the \emph{entropy} or {\sc Bary's} \cite[p.\,421]{Bary:1964sj} logarithmic measure, and so on. One way to summarize Theorem \ref{thm:intro_thm} is that it gives analytic information about a function by synthesizing geometric data.

Having said this, many properties are \textbf{not} encoded in the length sequence $\{l_k\}$. Indeed, there are lots of configurations of intervals with the same prescribed lengths. With different arrangements, the complement $E^c$ may be countable, uncountable, nowhere dense, or even the disjoint union of closed intervals. In the last case, the precise location of these ``gaps"  is important. For these (and other!) reasons, Theorem \ref{thm:intro_thm} is likely not the final word on this problem.

\subsection{Map and Highlights}
Here are some of the key ideas and a few token results. In \S3 we reduce matters to an incidence problem for translates of an open set with its complement. The identification and partial solution of this problem is really this paper's main contribution. This machinery is applied to the superlevel sets $E_y$ leading to a proof of Theorem \ref{thm:intro_thm}. Another striking fact proved there is that $\omega(\chi_E,t)_p=O\big(t^{\frac{1}{p}(1-d_X)-\epsilon}\big)$ for any $\epsilon>0$. Here $d_X$ is the Besicovitch--Taylor dimension index of $E$.

In \S4 we test our methods on specific examples which provide concrete estimates and comparisons amongst the various results.
For instance, if $E$ is the complement of a ``Fat Cantor set", we show that $\omega(\chi_E,t)_p=O\big(t^{\frac{1}{p}(1-d_X)}\big)$. This improves the ``$\epsilon$ loss" above but we do not know if it is sharp.

Multidimensional variants of our results are given in \S5. The treatment there is brief and focuses mainly on generalizing the incidence problem. Finally, \S6 speculates on where and how deep this rabbit hole goes.

I think you will become as enamored as I am with this problem so let us get started.

\section{Proofs}
The bulk of our effort is directed at studying characteristic functions of open sets. To understand why, observe that the ``layer cake" formula $f(x)=\int_0^1\chi_{E_y}(x)\,dy$ implies that
\[
\paren{\int_0^1|f(x+h)-f(x)|^p\,dx}^{\frac1p}=\paren{\int_0^1\abs{\int_0^1(\chi_{E_y}(x+h)-\chi_{E_y}(x))\,dy}^p\,dx}^{\frac1p}.
\]
Minkowski's inequality and the bound $\abs{\chi_{E_y}(x+h)-\chi_{E_y}(x)}\leq1$ then give
\begin{align*}
\paren{\int_0^1|f(x+h)-f(x)|^p\,dx}^{\frac1p}
&\leq\int_0^1\paren{\int_0^1|\chi_{E_y}(x+h)-\chi_{E_y}(x)|\,dx}^{\frac1p}\,dy.
\end{align*}
The inner integral is  essentially the $L^1$ modulus of continuity of $\chi_{E_y}(x)$. The next definition will help make this more precise.
\begin{defn}\label{def:tau}
Let $E$ be an open subset of the circle and let $K$ denote its complement. For $h>0$, define the function
\begin{align*}
\tau(h,E) &=\mes\{x:|\chi_E(x+h)-\chi_E(x)|\neq0\}\\
&=\mes\{x\in E:x+h\in K\} +\mes\{x\in E:x-h\in K\}.
\end{align*}
\end{defn}
It is fairly straightforward to check that $\disp\omega(\chi_E,t)_p=\sup_{|h|\leq t}(\tau(h,E))^\frac1p$, and our earlier computation can now be summed up rather nicely.
\begin{prop}\label{prop:main_reduc}
For $p\geq1$, we have $\disp\omega(f,t)_p\leq\int_0^1\big(\sup_{h\leq t} \tau(h,E_y)\big)^{\frac1p}\,dy$.
\end{prop}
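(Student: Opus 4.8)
The plan is to resume from the chain of inequalities already assembled just before the statement and close the one remaining gap, which is merely the passage to the supremum over $h$. After the layer-cake/Minkowski step, the excerpt has reduced the $L^p$ oscillation of $f$ at a \emph{fixed} shift $h$ to the inner integral $\int_0^1|\chi_{E_y}(x+h)-\chi_{E_y}(x)|\,dx$. My first move is to identify this integral with $\tau(h,E_y)$. This is immediate from the second line of Definition \ref{def:tau}: the integrand $|\chi_{E_y}(x+h)-\chi_{E_y}(x)|$ is exactly the indicator of the set $\{x:\chi_{E_y}(x+h)\neq\chi_{E_y}(x)\}$, whose measure is $\tau(h,E_y)$. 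It is worth flagging why the exponent $p$ vanished: the difference of two characteristic functions takes values in $\{-1,0,1\}$, so $|\cdot|^p=|\cdot|$ pointwise and the $p$-th power inside the inner integral collapses. This yields, for each fixed $h$,
\[
\paren{\int_0^1|f(x+h)-f(x)|^p\,dx}^{1/p}\leq\int_0^1\paren{\tau(h,E_y)}^{1/p}\,dy.
\]

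Next I would take the supremum over $|h|\leq t$ on both sides. On the left this is exactly $\omega(f,t)_p$ by definition. On the right I use the elementary bound $\sup_h\int\leq\int\sup_h$: since for every $y$ and every admissible $h$ one has $(\tau(h,E_y))^{1/p}\leq\sup_{|h'|\leq t}(\tau(h',E_y))^{1/p}$, integrating in $y$ and then taking the supremum over $h$ on the left gives
\[
\sup_{|h|\leq t}\int_0^1(\tau(h,E_y))^{1/p}\,dy\leq\int_0^1\sup_{|h|\leq t}(\tau(h,E_y))^{1/p}\,dy.
\]
Finally, because $s\mapsto s^{1/p}$ is non-decreasing, the supremum commutes with the exponent, $\sup_{|h|\leq t}(\tau(h,E_y))^{1/p}=\big(\sup_{|h|\leq t}\tau(h,E_y)\big)^{1/p}$, and the displays combine to the claimed inequality. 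The symmetry $\tau(-h,E_y)=\tau(h,E_y)$ built into Definition \ref{def:tau} shows the supremum over $|h|\le t$ agrees with the one-sided supremum written in the statement.

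The algebra here is light, so the only point demanding genuine care---and the step I expect to be the real obstacle---is justifying that the right-hand side is a legitimate (measurable) integral, so that $\sup\int\le\int\sup$ and the Minkowski manipulation are valid. For this I would first check that $y\mapsto\tau(h,E_y)$ is measurable for each fixed $h$ (a Fubini statement about the measurable set $\{(x,y):f(x+h)>y,\ f(x)\le y\}$ and its reflection), and then that $h\mapsto\tau(h,E_y)$ is continuous---indeed $\tau(h,E)=\norm{\chi_E(\cdot+h)-\chi_E}_{L^1}$, which is continuous in $h$ by continuity of translation in $L^1$. Continuity lets me replace the supremum over the interval $|h|\le t$ by a supremum over a countable dense subset, so $y\mapsto\sup_{|h|\le t}\tau(h,E_y)$ is a countable supremum of measurable functions, hence measurable. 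With measurability secured, the interchange inequality and the monotonicity of $s\mapsto s^{1/p}$ complete the argument.
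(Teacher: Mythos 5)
Your proposal is correct and takes essentially the same route as the paper: the paper's ``proof'' of Proposition \ref{prop:main_reduc} is precisely the layer-cake/Minkowski computation preceding Definition \ref{def:tau}, combined with the identification $\omega(\chi_E,t)_p=\sup_{|h|\leq t}(\tau(h,E))^{1/p}$, which is exactly how you close the argument. The extra care you take with measurability of $y\mapsto\sup_{h\leq t}\tau(h,E_y)$ (via continuity of translation in $L^1$ and a countable dense set of shifts) fills in a step the paper dismisses as ``fairly straightforward to check.''
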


Evidently, it all boils down to finding estimates for $\tau(h,E)$. From the definition, it is clear that estimating $\tau(h,E)$ requires solving an incidence problem. This is the problem of determining when translates of an arbitrary open set intersect with its complement. Intuitively, this seems related to measuring the ``proximity" of points in $E$ to its complement. 

Let us dispatch some easy but important cases. First, we have the obvious bound, $\tau(h,E)\leq2\min\{|E|,|K|\}$. Thus $\tau(h,E)=0$ if either $E$ or $K$ has measure zero.

The other easy case is when $N(E)$ is finite since we have $\tau(h,E)\leq 2hN(E)$. This is the situation underlying the Banach and Hardy--Littlewood theorems. The example of $E=\bigcup_{k=1}^N(a_k,b_k)$ with $0<a_1<b_1<a_2<b_2<\ldots<a_N<b_N<1$ and $h$ small enough (i.e. $ h\leq\min\{a_1, 1-b_N,b_k-a_k, a_{k+1}-b_k\}$ or $h$ smaller than the lengths of all the intervals and ``gaps" between them) shows this bound is sharp for this particular case.

It turns out that we can ``interpolate" between these cases to get a partial solution of the incidence problem. This very simple result is the real workhorse of our paper.
\begin{lemma}\label{lem:main_lem}
If $E=\bigcup_kI_k$, and $\{l_k\}$ is its length sequence, then
\begin{equation}\label{eqn:main_sum}
\tau(h,E) \leq2\sum_{l_k\geq h} h+2\sum_{l_k<h} l_k.
\end{equation}
\end{lemma}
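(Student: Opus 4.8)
The plan is to reduce $\tau(h,E)$ to a sum of contributions from the individual component arcs $I_k$ and to bound each contribution by $\min\{h,l_k\}$. First I would use the disjointness of the arcs: since $E=\bigsqcup_k I_k$, countable additivity of Lebesgue measure gives
\[
\mes\{x\in E:x+h\in K\}=\sum_k\mes\{x\in I_k:x+h\in K\},
\]
and likewise for the $x-h$ term in Definition \ref{def:tau}. So it suffices to estimate a single summand $\mes\{x\in I_k:x+h\in K\}$.

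For the per-arc bound, write $I_k=(a_k,b_k)$ with $l_k=b_k-a_k$. Because translation preserves Haar (Lebesgue) measure on the circle, substituting $u=x+h$ yields
\[
\mes\{x\in I_k:x+h\in K\}=\mes\big((a_k+h,b_k+h)\cap K\big).
\]
The key geometric observation---the heart of the incidence argument---is that $K\cap I_k=\emptyset$ since $I_k\subseteq E$. When $h\leq l_k$, the translated arc $(a_k+h,b_k+h)$ meets $I_k$ along $(a_k+h,b_k)$, on which $K$ is empty; hence $K$ can only appear in the residual arc $(b_k,b_k+h)$ of length $h$, giving the bound $h$. When $h>l_k$, the translated arc has total length $l_k$, so its intersection with $K$ is at most $l_k$. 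In both cases $\mes\{x\in I_k:x+h\in K\}\leq\min\{h,l_k\}$.

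Summing over $k$ and noting that $\min\{h,l_k\}=h$ precisely when $l_k\geq h$, I obtain
\[
\mes\{x\in E:x+h\in K\}\leq\sum_k\min\{h,l_k\}=\sum_{l_k\geq h}h+\sum_{l_k<h}l_k.
\]
The reflected term $\mes\{x\in E:x-h\in K\}$ satisfies the identical bound via the substitution $u=x-h$, and adding the two halves of $\tau(h,E)$ produces \eqref{eqn:main_sum}.

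The only point requiring genuine care is wrap-around on the circle: when $h$ is comparable to the circumference the translated arc may re-enter $I_k$, so in that regime I would fall back on the crude length bound $\mes\big((a_k+h,b_k+h)\cap K\big)\leq l_k$ (exactly what the $h>l_k$ case supplies) rather than on the clean residual-arc picture. Since the eventual application concerns $h\to0$, this costs nothing. I expect the main conceptual obstacle to be recognizing the disjointness $K\cap I_k=\emptyset$ as the lever that converts the translate-versus-complement incidence into the elementary estimate $\min\{h,l_k\}$; once that is in hand, the remainder is bookkeeping.
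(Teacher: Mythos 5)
Your proof is correct and is essentially the paper's own argument: both decompose $\tau(h,E)$ over the disjoint components $I_k$ and bound each component's contribution by $\min\{h,l_k\}$, using the fact that $I_k\subseteq E$ forces any point of $I_k$ translating into $K$ to lie within distance $h$ of an endpoint (the paper phrases this as $\{x\in I_k:x+h\in K\}\subset(b_k-h,b_k)$ and $\{x\in I_k:x-h\in K\}\subset(a_k,a_k+h)$, which is exactly your residual-arc picture without the change of variables $u=x+h$). Your wrap-around caveat is harmless but unnecessary: the containment of $(I_k+h)\setminus I_k$ in an arc of length $h$ holds on the circle whenever $h\leq l_k$, and in any case the lemma is only invoked as $h\to0$.
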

\begin{proof}
 It follows from Definition \ref{def:tau} that 
\[
\tau(h,E) =\sum_k\paren{\mes\{x\in I_k:x+h\in K\} +\mes\{x\in I_k:x-h\in K\}}
\]
We split the sum depending on whether $l_k\geq h$. If $I_k=(a_k,b_k)$ with $b_k-a_k\geq h$ it follows that $\{x\in I_k:x+h\in K\} \subset (b_k-h,b_k)$ while $\{x\in I_k:x-h\in K\} \subset (a_k,a_k+h)$.  Hence if $l_k\geq h$ we see that $|\{x\in I_k:x+h\in K\}|+|\{x\in I_k:x-h\in K\}|\leq 2h$. On the other hand if $l_k<h$ we use the trivial bound $|\{x\in I_k:x\pm h\in K\}|\leq l_k$ and the proof is complete.
\end{proof}

Before applying this Lemma, we comment briefly on its accuracy and the deeper idea behind it.
First define the set $K(h)=\{x:\dist(x,K)\leq h\}$. If $x$ is in $E$ but $x+h$ is in $K$, then $\dist(x,x+h)=h$ and thus $x$ belongs to $K(h)\backslash K$. Similarly, $x\in E$ and $x-h\in K$ also implies $x\in K(h)\backslash K$. From this follows the inclusions 
\begin{equation}\label{eqn:incl}
\{x:|\chi_E(x+h)-\chi_E(x)|\neq0\}\subset K(h)\backslash K\subset K(h).
\end{equation}
Lemma \ref{lem:main_lem} essentially estimates the measure of the middle set, $K(h)\backslash K$. This is likely not sharp since, in principle, the measure of all three sets could differ. 
Regardless, Lemma \ref{lem:main_lem} yields a preliminary version of Theorem \ref{thm:intro_thm}.
\begin{theorem}\label{thm:main_thm}
If $\phi\in\Phi$ and $\sum_{k} l_k\phi(l_k)\leq M<\infty$ then, as $h\to0$,  $\tau(h,E)\leq A/\phi(h)$ for some $A>0$ depending only on $M$ and $\phi$.
\end{theorem}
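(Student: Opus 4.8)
The plan is to feed the conclusion of Lemma \ref{lem:main_lem} into the two defining properties of the class $\Phi$, which were evidently designed for exactly this purpose: condition (ii) will control the contribution of the ``long'' intervals ($l_k \geq h$) and condition (i) the ``short'' ones ($l_k < h$). First I would fix $h$ small enough that $h < 1/c$, so that both monotonicity properties of $\phi$ are available on $(0,h]$; this is precisely the reason for the ``as $h \to 0$'' qualifier. Starting from
\[
\tau(h,E) \leq 2\sum_{l_k \geq h} h + 2\sum_{l_k < h} l_k,
\]
I would then estimate the two sums separately.

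For the first sum I would bound the number of long intervals. Since $t\phi(t)$ is non-decreasing and $l_k \geq h$, each long interval satisfies $l_k \phi(l_k) \geq h\phi(h)$; summing over just these indices and invoking the hypothesis $\sum_k l_k \phi(l_k) \leq M$ gives
\[
h\phi(h)\,\#\{k : l_k \geq h\} \leq \sum_{l_k \geq h} l_k \phi(l_k) \leq M,
\]
hence $\#\{k : l_k \geq h\} \leq M/(h\phi(h))$. Multiplying by $2h$ yields $2\sum_{l_k \geq h} h \leq 2M/\phi(h)$. For the second sum I would use that $\phi$ is non-increasing: when $l_k < h$ we have $\phi(l_k) \geq \phi(h)$, so $\phi(h)\sum_{l_k<h} l_k \leq \sum_{l_k < h} l_k\phi(l_k) \leq M$, which gives $2\sum_{l_k<h} l_k \leq 2M/\phi(h)$. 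Adding the two estimates produces $\tau(h,E) \leq 4M/\phi(h)$, so the theorem holds with $A = 4M$.

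The arithmetic here is routine; the real content—and the step I would flag—is recognizing the precise matching between the two clauses of the definition of $\Phi$ and the two halves of the split in Lemma \ref{lem:main_lem}, so that there is genuinely nothing left to prove once $\phi$ is in the right class. I would emphasize that only the \emph{monotonicity} conditions enter this particular estimate: concavity and boundedness of $t\phi(t)$ are not used here. Those extra hypotheses should instead guarantee continuity and be deployed later—presumably through Jensen's inequality—to pass from the length-sequence sum $\sum_k l_k\phi(l_k)$ to the averaged form $|E_y|\,\phi\big(|E_y|/N(E_y)\big)$ appearing in the second half of Theorem \ref{thm:intro_thm}.
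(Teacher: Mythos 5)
Your treatment of the short intervals ($l_k<h$) matches the paper's and is correct, since condition (i) holds on all of $(0,1]$. But the long-interval estimate has a genuine gap: condition (ii) in the definition of $\Phi$ only guarantees that $t\phi(t)$ is non-decreasing on the interval $(0,\tfrac1c)$, \emph{not} on all of $(0,1]$ (the paper signals this by saying the condition ``guarantees continuity on the same interval,'' and its own example $\phi(t)=t^{-\alpha}\exp(\gamma|\log t|^\beta)$ has $t\phi(t)$ decreasing near $t=1$ when $\beta>0$, so the global reading would exclude it from $\Phi$). Consequently your inequality $l_k\phi(l_k)\geq h\phi(h)$ is valid only for $h\leq l_k<\tfrac1c$; it can fail for component intervals with $l_k\geq\tfrac1c$, and with it both your counting bound $\#\{k:l_k\geq h\}\leq M/(h\phi(h))$ and your final constant $A=4M$. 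Concretely, take $\phi(t)=1/t$ on $(0,\tfrac12]$ and $\phi(t)=\tfrac1{10}$ on $(\tfrac12,1]$; this $\phi$ lies in $\Phi$ with $c=2$. Let $E$ be a single arc of length $0.6$. Then $M=\sum_k l_k\phi(l_k)=0.06$, while for $0<h<0.4$ one has $\tau(h,E)=2h$, which exceeds $4M/\phi(h)=0.24\,h$ for \emph{every} such $h$; so the claimed bound fails as $h\to0$, and your intermediate count gives $\#\{k:l_k\geq h\}\leq 0.06<1$ even though $E$ has one interval of length $\geq h$.

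The repair is the paper's three-way split: intervals with $l_k<h$ (handled as you do), intervals with $h\leq l_k<\tfrac1c$ (where your monotonicity argument is legitimate), and intervals with $l_k\geq\tfrac1c$. The last group is controlled without reference to $M$ at all: since the $I_k$ are disjoint arcs in a circle of circumference $1$, there are at most $c$ of them, so they contribute at most $2ch$ to $\tau(h,E)$, and this term is absorbed into $A/\phi(h)$ because $h\phi(h)\leq B$ for small $h$ (boundedness of $t\phi(t)$ on $(0,\tfrac1c)$). This produces $A=2(cB+M)$, which depends on $\phi$ through $c$ and $B$ exactly as the theorem statement permits; the counterexample above shows this dependence on $\phi$ is not cosmetic but necessary. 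For the same reason your closing remark needs amending: you are right that concavity is not used in this theorem (it enters only via Jensen in the second half of Theorem \ref{thm:intro_thm}), but the boundedness of $t\phi(t)$ \emph{is} used here—it is precisely what absorbs the very long intervals.
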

\begin{proof}
Since $\phi\in\Phi$, $\phi(l_k)\geq\phi(h)$ if $l_k<h$ while $l_k\phi(l_k)\geq h\phi(h)$ if  $\frac1c> l_k\geq h$. These observations applied to Lemma \ref{lem:main_lem} lead to the chain of inequalities:
\begin{align*}
\sum_{l_k<h} l_k+\sum_{l_k\geq h}h&\leq \sum_{l_k<h} l_k+\sum_{l_k\geq \frac1c}h+\sum_{h\leq l_k< \frac1c}h \\
&\leq \frac{1}{\phi(h)}\sum_{l_k<h} l_k\phi(l_k)+h\sum_{l_k\geq \frac1c}1+\frac{1}{\phi(h)}\sum_{h\leq l_k< \frac1c}l_k\phi(l_k).
\end{align*}
From this, and the boundedness $t\phi(t)$, follows
\[
\tau(h,E) \leq\frac{2}{\phi(h)}\Big(ch\phi(h)+\sum_{k} l_k\phi(l_k)\Big)\leq\frac{A}{\phi(h)},
\]
concluding the proof.
\end{proof}
Let us sketch the proofs of Theorem \ref{thm:intro_thm} and \eqref{eqn:indicatrix_BVp}.
\begin{proof}[Proof of Theorem \ref{thm:intro_thm} and \eqref{eqn:indicatrix_BVp}]
If $B$ is an upper bound for $ch\phi(h)$, Proposition \ref{prop:main_reduc} and Theorem \ref{thm:main_thm} applied to the family of open sets $E_y$ show that
\[
\omega(f,t)_1\leq\int_0^1\sup_{h\leq t}\frac{2}{\phi(h)}\big(B+\sum_{k} l_{y,k}\phi(l_{y,k})\Big)\,dy\leq\frac{2}{\phi(t)}\Big(B+\int_0^1\sum_{k} l_{y,k}\phi(l_{y,k})\,dy\Big).
\]
To obtain the last inequality, we recall  that $\phi$ is non-increasing which implies
\[\sup_{h\leq t}(1\big/\phi(h))\leq1\big/\phi(t).\]
The integrand is finite by assumption and the first part of Theorem \ref{thm:intro_thm} is proved.
For the second part of Theorem \ref{thm:intro_thm}, observe that if $N(E_y)<\infty$, the concavity of $t\phi(t)$ and Jensen's inequality imply
\[
\sum_{k} l_{y,k}\phi(l_{y,k})\leq |E_y|\phi\paren{|E_y|/N(E_y)},
\]
which essentially completes the proof of Theorem \ref{thm:intro_thm}.

To prove \eqref{eqn:indicatrix_BVp}, we again use Proposition \ref{prop:main_reduc}. The key observation now is that for $N(E_y)$ finite we have $\tau(h,E_y)\leq2hN(E_y)$.
Hence
\[
\omega(f,t)_p\leq\int_0^1(\sup_{h\leq t}2hN(E_y))^{\frac1p}\,dy\leq t^\frac1p\int_0^1(n(y))^{\frac1p}\,dy,
\]
which implies \eqref{eqn:indicatrix_BVp}.
\end{proof}

\begin{rmk}\label{rmk:alt_BVp}
We outline a different proof of \eqref{eqn:indicatrix_BVp}. It combines two facts:

(i) $(n(y))^\frac1p\in L^1([0,1])$ implies $f$ is of bounded $p$--variation. This is stated in {\sc Garsia--Sawyer} \cite{Garsia:1964ss} and a proof is given in {\sc Asatiani--Chanturia} \cite[p.\,52]{Asatiani:1973lh}. This roughly corresponds to Theorem A.

(ii) If $f$ is of bounded $p$--variation then $\omega(f,t)_p=O(t^\frac1p)$. For this fact and the definition of $p$--variation, consult {\sc Young} \cite[p.\,259]{Young:1936xw}, and  {\sc Gehring} \cite[p.\,424]{Gehring:1954wq}. This corresponds to Theorem B. The appropriate converse can be found in {\sc Terekhin} \cite[p.\,661]{Terekhin:1967ss} and {\sc Kolyada--Lind} \cite[p.\,589]{Kolyada:2009xu}. 
\qed
\end{rmk}

A drawback of the arguments given so far is that they do not specify the function $\phi(t)$. One remedy is to select the ``power type" family $\phi(t)=t^{\beta-1}$ and to pick $\beta$ in some optimal fashion depending on $E$. For this we introduce the following ``dimension index" first defined by {\sc Besicovitch--Taylor} \cite{Besicovitch:1954vg}.
\begin{defn}
The \emph{Besicovitch--Taylor index}, $d_X(E)$, is defined by
\[d_X=\inf\{\beta:\sum l_k^\beta<\infty\},\] 
where $E=\bigcup I_k$ and $\{l_k\}$ is its length sequence.
\end{defn}
The notation $d_X$ is from {\sc Tricot}  \cite{Tricot:1986hp} who also christened it the \emph{exchange index}.
With this notion the following is an immediate consequence of Theorem \ref{thm:main_thm}. 
\begin{cor}\label{cor:BT}
We have $\tau(h,E)=O\paren{h^{\alpha}}$, for all $0\leq\alpha<1-d_X(E)$.
\end{cor}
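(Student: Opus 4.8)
The plan is to feed the power-type gauge $\phi(t)=t^{\beta-1}$ into Theorem \ref{thm:main_thm} and then choose the exponent $\beta$ optimally relative to the index $d_X$. First I would verify that $\phi(t)=t^{\beta-1}$ belongs to the class $\Phi$ whenever $0\leq\beta\leq1$: the map $t\mapsto t^{\beta-1}$ is non-increasing precisely when $\beta\leq1$, while $t\phi(t)=t^\beta$ is non-decreasing (needing $\beta\geq0$), concave (needing $\beta\leq1$), and bounded on $(0,1)$ (take $c=1$). So the hypotheses of Theorem \ref{thm:main_thm} are available for this entire one-parameter family of gauges.

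With this choice the two quantities in Theorem \ref{thm:main_thm} become completely explicit. The summability condition $\sum_k l_k\phi(l_k)<\infty$ reads $\sum_k l_k^\beta<\infty$, and the conclusion $\tau(h,E)\leq A/\phi(h)$ reads $\tau(h,E)\leq Ah^{1-\beta}$. By the very definition of the Besicovitch--Taylor index, $\sum_k l_k^\beta<\infty$ holds for every $\beta>d_X$, so Theorem \ref{thm:main_thm} immediately delivers $\tau(h,E)=O(h^{1-\beta})$ for each such $\beta$.

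It then remains to translate the exponent. Given any target $\alpha$ with $0\leq\alpha<1-d_X$, the inequality $1-\alpha>d_X$ lets me select $\beta$ in the nonempty interval $(d_X,\,1-\alpha]$; such a $\beta$ automatically satisfies $0\leq\beta\leq1$ (so $\phi\in\Phi$) together with $1-\beta\geq\alpha$. Since $0<h\leq1$, raising $h$ to a larger exponent only shrinks the power, whence $h^{1-\beta}\leq h^\alpha$ and therefore $\tau(h,E)=O(h^{1-\beta})=O(h^\alpha)$, as claimed.

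There is no serious obstacle here, as the statement is a direct specialization of Theorem \ref{thm:main_thm}; the argument is essentially bookkeeping on exponents. The only points demanding a moment's care are the membership check $\phi\in\Phi$, which forces the constraint $\beta\leq1$ and is exactly the reason the exponent range stops at $1-d_X$ rather than extending further, and the strict inequality $\alpha<1-d_X$, which is precisely what guarantees the gap $(d_X,\,1-\alpha]$ in which an admissible $\beta$ can be found.
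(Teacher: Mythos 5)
Your proposal is correct and follows exactly the route the paper intends: the paper introduces the power-type family $\phi(t)=t^{\beta-1}$ precisely for this purpose and declares the corollary an immediate consequence of Theorem \ref{thm:main_thm}, which is what you carry out, with the membership check $\phi\in\Phi$ and the choice $\beta\in(d_X,1-\alpha]$ spelled out. No gaps; your write-up is just a more explicit version of the paper's one-line deduction.
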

The inclusion of the endpoint, $1-d_X$, in Corollary \ref{cor:BT} depends on the open set being considered. When this endpoint is not included, we need to either (i) fine tune the choice for $\phi$, or (ii) perform the optimization with a different family of functions altogether, or (iii) employ the more refined Lemma \ref{lem:main_lem} directly. This issue is explored further in \S4.

\section{Examples}
In this section, we get to see the ideas of \S3 in action\footnote{These examples can be viewed as exercises for the author.}. We warn that none of the figures are drawn to scale.
\begin{eg}[The Textbook Example]
The standard continuous function that is not of bounded variation is $\disp g(x)=\abs{x\sin\frac\pi x}$ for $0<x\leq1$ with $g(0)=0$. We will compute the $L^1$ modulus of continuity of this function to see how close it comes to being of bounded variation. To make the algebra easier we  perform the computation for a different but closely related function. 

\begin{minipage}{0.5\textwidth}
The construction of this function can be found in {\sc Pierpont} \cite[p.\,351]{Pierpont:1905os}. Fix any $b>1$ and define $f(0)=f(b)=0$, and $f(1/k)=1/k$ for $k\in\N$. Pick $1/(k+1)<a_k<1/k$ (the midpoint is a good choice) and define $f(a_k)=0$ as well. The function is defined by linear interpolation between successive points everywhere else on $(0,b)$.
\end{minipage}
\qquad
\begin{minipage}{0.5\textwidth}
\includegraphics[scale=.575]{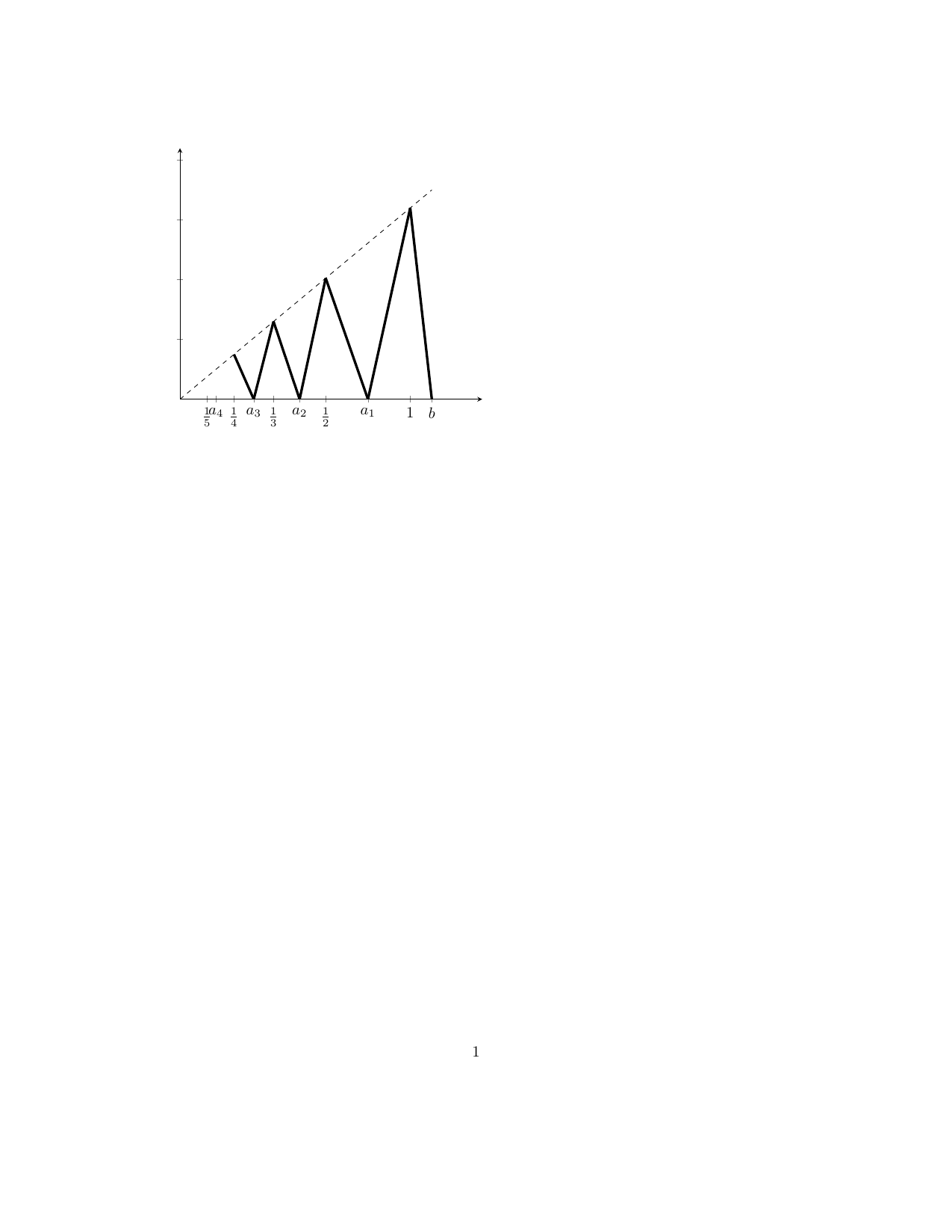}
\end{minipage}
We use the different results in turn. This should clarify their relative strengths.
\begin{enumerate}[(i)]
\item For $1/(k+1)<y<1/k$  we see that $n(y)=2k$ and $N(E_y)=k$. Hence
\[
\int_0^1n(y)\,dy =\sum_{k=1}^\infty\int_{\frac{1}{k+1}}^{\frac1k }n(y)\,dy =2 \sum_{k=2}^\infty k^{-1}=\infty,
\]
and so $f$ is not of bounded variation by Banach's result, i.e. Theorem A. 
\item We now apply Theorem \ref{thm:intro_thm} with $\phi(t)=t^{-\alpha}$ and $0\leq\alpha<1$. Here
\[
\int_0^1|E_y|\,\phi\paren{|E_y|\big/N(E_y)}\,dy=\int_0^1|E_y|\paren{|E_y|\big/N(E_y)}^{-\alpha}\,dy\leq\int_0^1\paren{N(E_y)}^{\alpha}\,dy.
\]
A similar computation to the one earlier shows
\[
\int_0^1\paren{N(E_y)}^{\alpha}\,dy\leq\sum_{k=1}k^{\alpha-2}.
\]
If $0\leq\alpha<1$, this sum converges and so $\omega(f,t)_1=O(t^\alpha)$.  Hence $f$ just \emph{barely fails} being of bounded variation. Incidentally, we have also shown that for $p>1$, $(N(E_y))^{\frac1p}$ is integrable. Thus for $p>1$, $f$ is of bounded $p$--variation and $\omega(f,t)_p=O(t^\frac1p)$.
  \item We go a step further and apply Theorem \ref{thm:intro_thm} with $\phi(t)=t^{-1}|\ln t|^{-\beta}$. Now
\[
\int_0^1|E_y|\,\phi\paren{\frac{|E_y|}{N(E_y)}}dy=\int_0^1\frac{N(E_y)}{\big(\ln(N(E_y)\big/|E_y|)\big)^\beta}dy=\sum_{k=1}^\infty k\int\limits_{\frac{1}{k+1}}^{\frac1k }\frac{dy}{\big(\ln(k\big/|E_y|)\big)^\beta}.
\]
For large $k$, the summand is roughly $\dfrac{1}{k(\ln k)^\beta}$, and this sum converges if $\beta>1$, but diverges otherwise. Thus, $\omega(f,t)_1=O(t|\ln t|^\beta)$ if $\beta>1$.

\item The preceeding estimate actually holds with $\beta=1$. The key idea is to appeal directly to Lemma \ref{lem:main_lem}. In the rest of the discussion we fix $b=\frac32$. Now Lemma \ref{lem:main_lem} implies
\[
\omega(f,h)_1\leq2\int_0^1\sum_{l_{y,k}\geq h} h+\sum_{l_{y,k}<h} l_{y,k}\,dy.
\]
Recall that if $1/(k+1)< y<1/k$, $E_y$ is a union of $k$ intervals. If an interval $I_j$ belong to $E_{y^*}$ for some $y^*$ then $I_j\in E_{y}$ for $y\leq y^*$. Thus once an interval ``appears" it persists though, of course, its length changes with $y$. Indexed in order of appearance, the interval lengths are given by  $l_{y,1}=\frac34(1-y)$ and $l_{y,j}=\frac{j}{j^2-1}(\frac1j-y)$ for $2\leq j\leq k$.  Moreover, $l_{y,j}\geq h$ exactly when $h(k^2-1)<1$ and  $y<\frac1j-\frac{(j^2-1)h}{j}$ both hold.

It follows from this admitedly sketchy discussion that
\[
\omega(f,h)_1\leq I +II +III
\] 
where 
\begin{align*}
I&:=2h\int_0^{1-\frac{4h}{3}}dy+\frac32\int_{1-\frac{4h}{3}}^1(1-y)\,dy\\
II&:=2\sum_{k=2}^{(1+\frac1h)^{\frac12}}\brac{h\int_0^{\frac1k-\frac{(k^2-1)h}{k}}\,dy+\frac{k}{k^2-1}\int_{\frac1k-\frac{(k^2-1)h}{k}}^{\frac1k}\paren{\frac1k-y}\,dy}\\
III&:=2\sum_{k=(1+\frac1h)^{\frac12}}^\infty\frac{k}{k^2-1}\int_0^{\frac1k}\paren{\frac1k-y}\,dy 
\end{align*}
Fairly elementary computations now show that
\[
\omega(f,h)_1\leq 2h\sum_{k=1}^{(1+\frac1h)^{\frac12}}\frac1k+\sum_{k=(1+\frac1h)^{\frac12}}^\infty\frac{1}{k(k^2-1)}\leq h\ln\frac1h+O\paren{h}.
\]
\item At serious risk of pedantry, let us compute the $L^1$ modulus of continuity of $g(x)=|x\sin(\pi/x)|$ using the usual definition.
\begin{align*}
\omega(g,h)_1&=\int_0^1\abs{|(x+h)\sin(\pi/(x+h))|-|x\sin(\pi/x)|}\,dx\\
&\leq\int_0^1\abs{x(\sin(\pi/(x+h))-\sin(\pi/x)}\,dx+h\\
&\leq2\int_0^1\abs{x\sin\paren{\frac{\pi h}{2x(x+h)}}}\,dx+h\\
&\leq\int_0^1\frac{\pi h}{x+h}\,dx+h\leq\pi h\ln\paren{\frac{1+h}{h}}+h=O(h|\ln h|),
\end{align*}
which agrees with preceding computation.
\end{enumerate}
\end{eg}

\begin{eg}
As in {\sc Terekhin}, \cite[p.\,663]{Terekhin:1967ss}, we define $T(x)$ on $[0,1]$ as follows.
\begin{minipage}{0.5\textwidth}
Its graph is given by the equal sides of an isosceles triangles of unit height and base the interval $[2^{-k}, 2^{1-k}]$  for $k\in\N$ with $T(0)=0$, say. Terekhin shows that $\omega(T,t)_p=O(t^\frac1p)$ for $p>1$ and that $T(x)$ is not of bounded $q$--variation for any $q$.
\end{minipage}
\qquad
\begin{minipage}{0.5\textwidth}
\includegraphics[scale=.555]{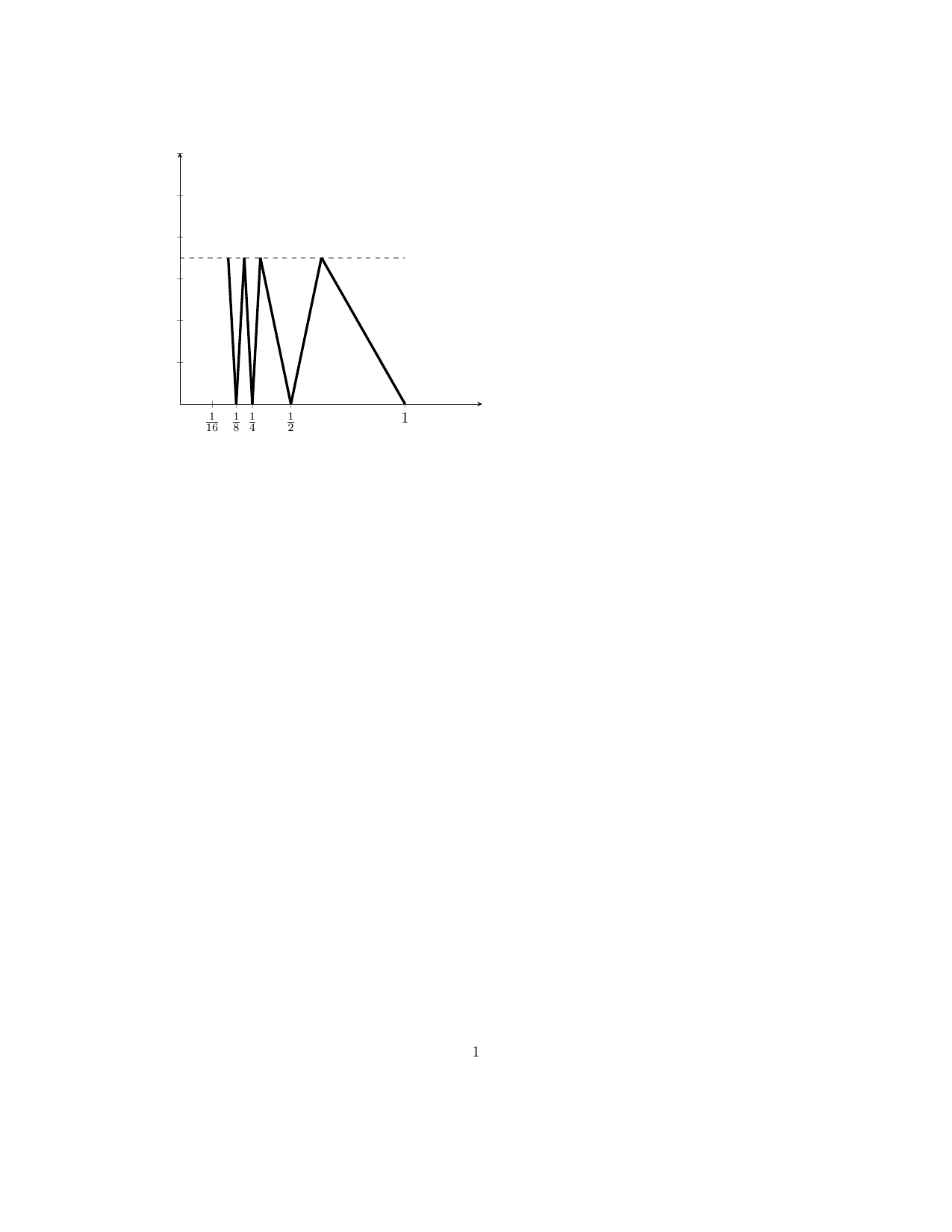}
\end{minipage}
Strictly speaking this function lies outside of the scope of Theorem \ref{thm:intro_thm}: it is continuous on the interval $[\epsilon,1]$ for any $\epsilon>0$ but is not continuous at 0. For $0\leq y\leq1$, $n(y)=\infty$ so this function is not of bounded variation. However, for $0\leq y<1$, $E_y$ is still an open set whose component intervals have length $l_{y,k}=2^{-k}(1-y)$ for $k\in\N$. The endpoints of these intervals have 0 as their only accumulation point.

As in the earlier example we take $\phi(t)=t^{-\alpha}$, where $0\leq\alpha<1$ and get that
\[
\int_0^1\sum_kl_{y,k}\phi(l_{y,k})\,dy=\int_0^1(1-y)^{1-\alpha}\sum_{k=1}^\infty2^{-k(1-\alpha)}\,dy<\infty.
\]
Hence $\omega(T,t)_1=O(t^{\alpha})$ for any $0\leq\alpha<1$. This suggests that the converse of \eqref{eqn:indicatrix_BVp} is false.

We claim that $\omega(T,t)_p=O(t^\frac1p|\log t|^\frac1p)$ for $p\geq1$. For $p>1$ this is just shy of Terekhin's result. For $p=1$, this estimate might well be new. To verify our claim, we once again use \eqref{eqn:main_sum} of Lemma \ref{lem:main_lem} and estimate
\[
\tau(h,E_y)\leq2\sum_{l_{y,k}\geq h} h+2\sum_{l_{y,k}<h} l_{y,k}\leq 2h\log_2\tfrac{1-y}{h}+ 2\sum_{k=\log_2\frac{1-y}{h}}^\infty2^{-k}(1-y).
\]
Thus 
\[\int_0^1\tau(h,E_y)\,dy\leq A\int_0^1\paren{h+h\log\tfrac{1-y}{h}}\,dy=O(h|\log h|),\]
and the claim follows from Proposition \ref{prop:main_reduc} and H\"older's inequality.
\end{eg}

\begin{eg}[Fat Cantor Set]
We briefly recall the construction of so called ``Fat Cantor sets" (FCS).

\begin{minipage}{0.5\textwidth}
Fix $0<\lambda<\frac13$ and remove the middle open portion of $[0,1]$ of length $\lambda$. The construction continues by removing the open middle portion of length $\lambda^2$ from the remaining two closed subintervals, and so on. At stage $k$ we remove  $2^{k-1}$ intervals each of length $\lambda^k$. The limiting set, $K$, has measure $\frac{1-3\lambda}{1-2\lambda}\in (0,1)$
\end{minipage}
\quad
\begin{minipage}{0.4\textwidth}
\includegraphics[scale=.455]{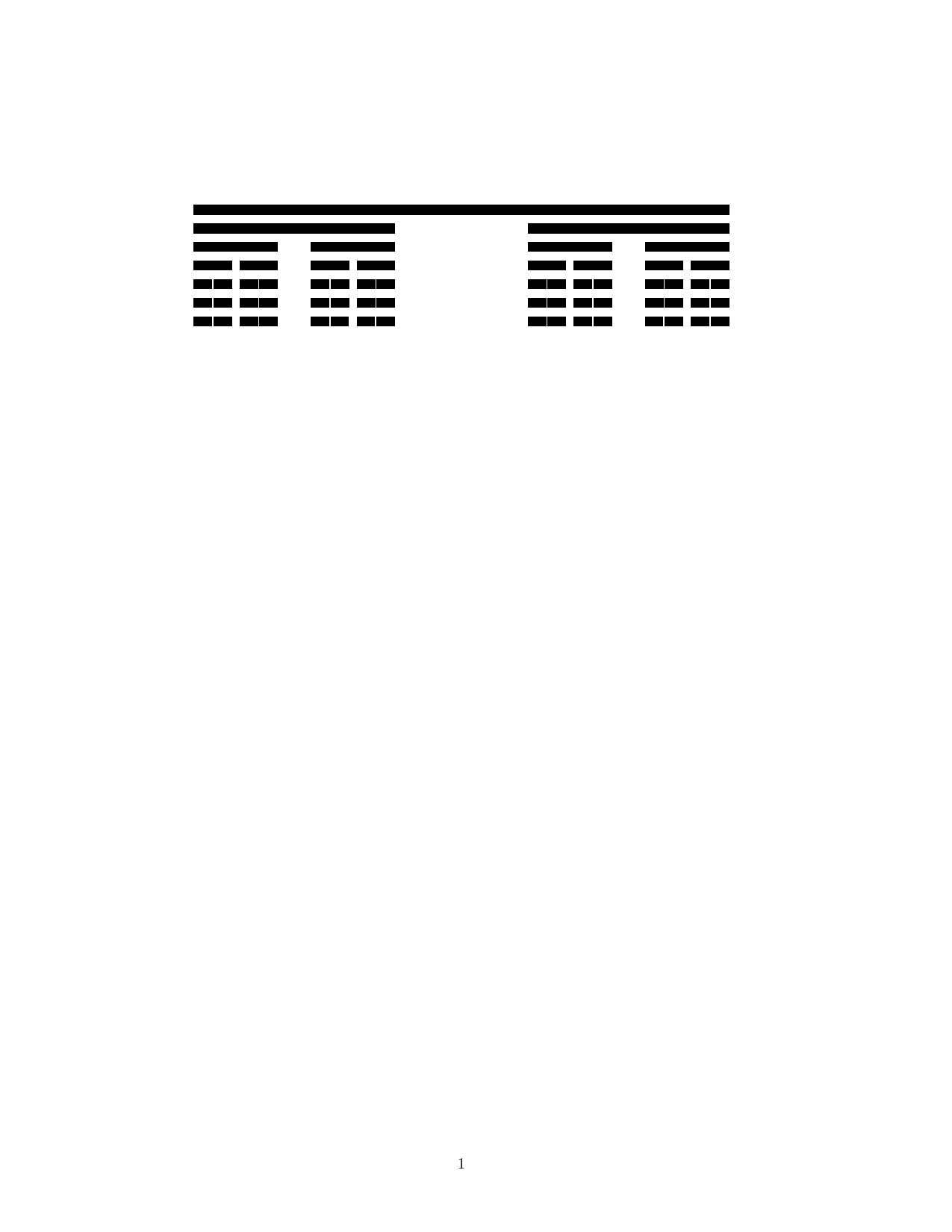}
\end{minipage}

The FCS has empty interior and could \emph{never} be the complement of the superlevel set of a continuous function. Indeed if $0<y<1$ and $K_y=\{x:f(x)\leq y\}$ is the complement of $E_y$, then $K_y$ contains at least one closed non-degenerate interval as a result of the continuity of $f$.

We will apply Theorem \ref{thm:main_thm} to the open set $E$ consisting of all the intervals removed. For $0\leq\alpha<1$ and $\beta\geq0$ let $\phi(t)=t^{-\alpha}\paren{\log\frac1t}^\beta$ it follows that
\[
\sum_{j} l_j\phi(l_j)=\tfrac12|\log\lambda|^\beta\sum_j(2\lambda^{1-\alpha})^jj^\beta
\]
This sum converges as long as $2\lambda^{1-\alpha}<1$ or when $\alpha<1-\frac{\log 2}{\log\frac1\lambda}$. Hence 
\[
\tau(h,E) =O_{\lambda,\epsilon}(h^{1-\frac{\log 2}{\log\frac1\lambda}-\epsilon}),\quad \epsilon>0;
\]
where the implied constant depends on $\lambda$ and $\epsilon$.

We now show that the exponent of $h$ can be chosen equal to $1-\frac{\log 2}{\log\frac1\lambda}=1-d_X(E)$.

Given $h>0$, choose $m$ such that $\lambda^{m+1}< h\leq\lambda^m$ so, roughly, $m\sim\frac{\log_2h}{\log_2\lambda}$.  Using Lemma \ref{lem:main_lem} we have
\[
\tau(h,E)\leq2\sum_{l_j\geq h} h+2\sum_{l_j<h} l_j \leq 2h\sum_{j=0}^m2^j+\sum_{j=m+1}^\infty 2^{j-1}\lambda^j.
\]
Summing a geometric series and some algebra now gives
\[
\tau(h,E)\leq h2^{m}\tfrac{(3-4\lambda)}{1-2\lambda}\leq\tfrac{(3-4\lambda)}{1-2\lambda}\cdot h^{1-\frac{\log2}{\log\frac1\lambda}}.
\]
\end{eg}
\begin{eg}[FCS Revisited]
The estimate in Example 3 is sharp as we now show. The ensuing argument uses, as it must, the actual structure of $K$. Since
\[
\tau(h,E) =\sum_k\paren{\mes\{x\in I_k:x+h\in K\} +\mes\{x\in I_k:x-h\in K\}},
\]
we only need to show that as $h\to0^+$ there is a small positive number $a$, independent of $h$, such that
\[
\sum_{l_k\geq h}\mes\{x\in I_k:x+h\in K\} > a h^{1-\frac{\log2}{\log\frac1\lambda}}.
\]
Let $K_m$ be the Cantor set after the $m$-th stage. By construction, $K_m=\bigcup J^m_k$ is a finite union of closed intervals of equal length given by
\[l(J^m_k)=2^{-m}\paren{1-\sum_{j=1}^m2^{j-1}\lambda^j}=2^{-m}\paren{1-\frac{\lambda(1-(2\lambda)^m)}{1-2\lambda}}.\]

 Assume $h\leq\lambda$ and, as before, let $m$ satisfy $\lambda^{m+1}< h\leq\lambda^m$.
Each interval $I_k$, from $E$ with $l(I_k)\geq h$ touches two consecutive intervals $J_{l-1}^m$ and $J_l^m$ in $K_m$. Because $\lambda^m<l(J^m_l)$ and $l(I_k)\geq h$, if $I_k$ abuts $J_{l-1}^m$on the right then $I_k+h$ intersects $J_{l-1}^m$ in an interval of length $h$. We still need to determine the measure $\abs{\{x\in I_k:x+h\in K\}}$. 
\begin{center}
\includegraphics[scale=.85]{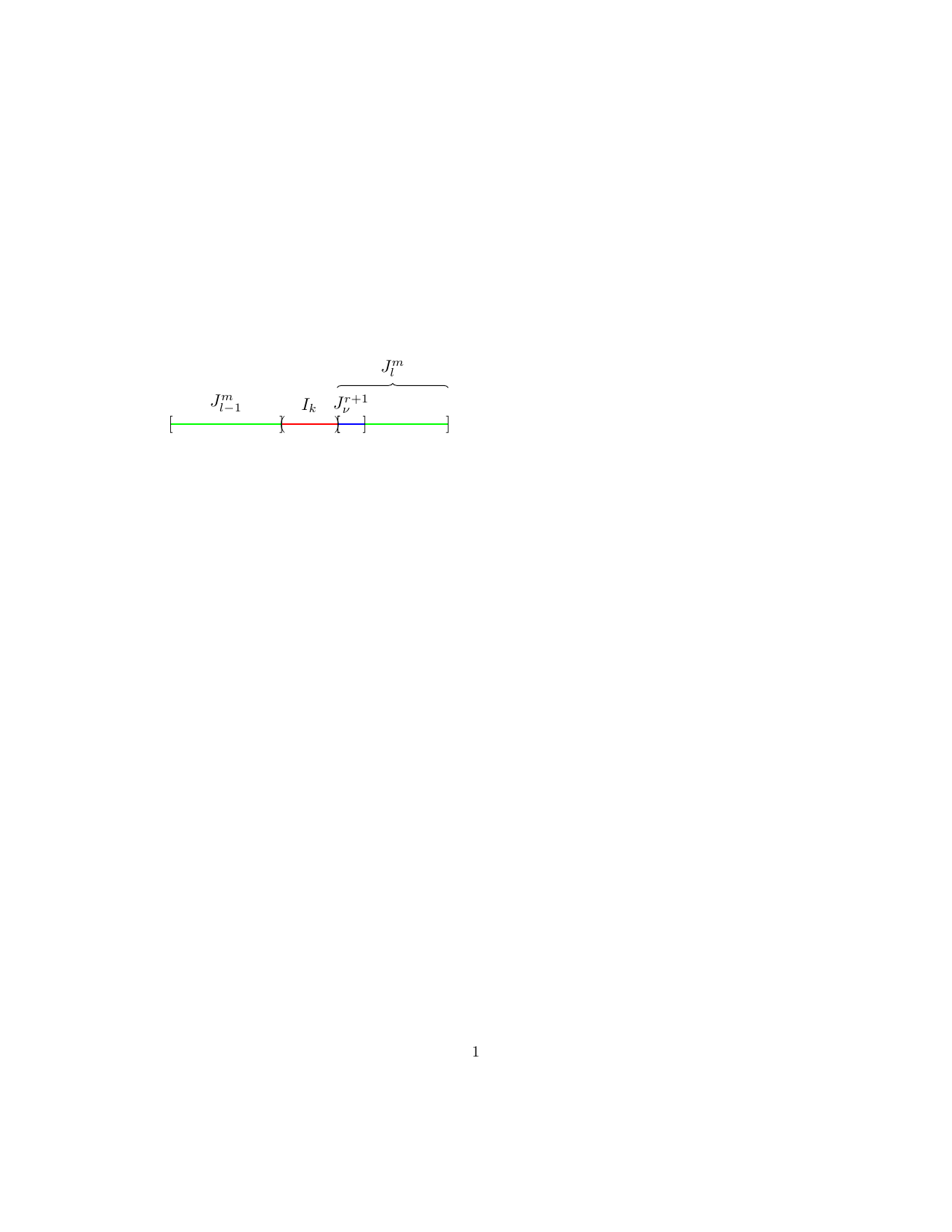}
\end{center}
To this end we choose $r\geq m$ so that at the $r$th stage $J^{r+1}_\nu\subset I_k+h\subset J^r_l$ for some $\nu$ (see above figure). In particular $r$  will be the largest integer for which
\[2^{-r}\paren{1-\frac{\lambda(1-(2\lambda)^r)}{1-2\lambda}}\geq h\]
If $A=\frac{\lambda}{1-2\lambda}$ and $B=\frac{1-3\lambda}{1-2\lambda}$ and $\gamma=\log_2\frac1\lambda$ then, essentially, $2^{-r}$ solves
\[Az^\gamma+Bz-h=0.\]
If $J_{\nu_k}^{r+1}\subset I_k+h\subset J_{l_k}^{r}$ are prescribed as above, we can determine $|K \cap J_\nu^{r+1}|$ by successively removing middle intervals of length $\lambda^{r+2},\lambda^{r+3},\lambda^{r+4},\ldots,$ from $J^{r+1}_\nu$. Consequently,
\begin{align*}
\sum_{l(I_k)\geq h} \mes\{x\in I_k:x+h\in K\}\geq\sum_{k=0}^{m-1}|K \cap J_{\nu_k}^{r+1}|&\geq (2^m-1)\cdot2^{-r-1}B.
\end{align*}
Since $\lambda^{m+1}< h\leq\lambda^m$, some algebra shows that $2^m\geq\frac12h^{-\frac1\gamma}$ which means $2^m-1\geq\frac14h^{-\frac1\gamma}$.

It remains to estimate $2^{-r}$ from below. This requires knowing the asymptotic behavior of solutions to $Az^\gamma+Bz-h=0$ for small $h$. To handle this, write $z=sh/B$ so that the equation becomes $\frac{h^{\gamma-1}A}{B^\gamma}s^\gamma=1-s$. Since $s>0$, the left side is positive so $s<1$ must hold. This in turn yields $s>1-h^{\gamma-1}A/B^\gamma$ and from this follows $2^{-r}>\frac hB(1-h^{\gamma-1}A/B^\gamma)$.
Hence
\[\tau(h,E)\geq (2^m-1)\cdot2^{-r-1}B\geq\tfrac18(1-h^{\gamma-1}A/B^\gamma)h^{1-\frac{\log2}{\log\frac1\lambda}}\geq \tfrac{1}{16}h^{1-\frac{\log2}{\log\frac1\lambda}},\]
as long as  $h\leq\paren{\frac{B^\gamma}{2A}}^{\frac{1}{\gamma-1}}$.
\end{eg}
\section{Generalizations}
We sketch simple higher dimensional variants of some ideas from \S3.

Let $E$ be an arbitrary, non-empty bounded open subset of $\R^d$. To be concrete we assume $E\subset[0,1]^d$. Let $\Gamma=\overline{E}\backslash E$ denote its boundary and $K=\R^d\backslash E$ its complement. If $h>0$ we also define the $h$-neighborhoods 
\[
K(h)=\{x:\dist(x,K)\leq h\};\qquad
\Gamma(h)=\{x:\dist(x,\Gamma)\leq h\}.
\]
If $\vec{v}$ is a unit vector, we are interested in
\[\tau(h,\vec{v},E)=\mes\{x\in\R^d:|\chi_E(x+h\vec{v})-\chi_E(x)|\neq0\}.\]
Again, we have
\[
\{x\in\R^d:|\chi_E(x+h\vec{v})-\chi_E(x)|\neq0\}\subset K(h)\backslash K\subset\Gamma(h),
\]
which in turn implies that
\[\tau(h,\vec{v},E)\leq| K(h)\backslash K|\leq|\Gamma(h)|.\]
Define the Besicovitch--Taylor index
\[d_X(E)=d-\lim_{h\to0}\frac{\ln| K(h)\backslash K|}{\ln h},\]
and the Box-counting dimension
\[d_B(\Gamma)=d-\lim_{h\to0}\frac{\ln|\Gamma(h)|}{\ln h},\]
assuming for simplicity that both limits exist. The inclusion relation makes it easy to see that
\[d_X(E)\leq d_B(\Gamma).\]
From here it follows that given any $\epsilon>0$ there is a $\delta>0$ such that 
\[\tau(h,\vec{v},E)\leq h^{d-d_X-\epsilon};\qquad0<h<\delta\]
This in turn shows that $\omega(\chi_E,t)_1=O(t^{d-d_X-\epsilon})$. Related estimates are given by {\sc Sickel} \cite{Sickel:2021fp} who treats $E$ that are bounded, open and \emph{connected}. For the majority of domains considered in \cite{Sickel:2021fp}, $d_X(E)$ and $d_B(\Gamma)$ coincide as far as we can tell.
\section{Loose Ends}
What follows is a hodgepodge list of curious observations, half-baked ideas and naive questions related to our theme. They would have muddled the narrative of the other sections, but I still mention them in the off chance they spark something in your mind\footnote{If this happens, I would love to hear your thoughts.}.
\begin{enumerate}[(a)]
\item What properties of $E$ would improve our estimate for $\tau(h,E)$? \emph{Density}? \emph{Porosity}? \emph{Thinness/Thickness}? \emph{Capacity}?
\item A continuous nowhere differentiable function cannot be of bounded variation since $N(E_y)=\infty$ for $y$ in a set of positive measure. It would be nice to apply Theorem \ref{thm:intro_thm} to a concrete function of this type. The sample paths of various stochastic processes are the obvious dragons to slay here, but there is a whole zoo to choose from.
\item The superlevel sets satisfy the inclusion relation $E_{y_2}\subset E_{y_1}$ whenever $0\leq y_1\leq y_2\leq1$. How does the structure of $E_y$ change when $y$ is varied slightly?
\item  Given a partition $y_1<y_2<\ldots<y_M$ we have the decomposition $E_{y_i}=\bigcup I_{y_i,k}$. The intervals $\{I_{y_i,k}:1\leq k\leq N(E_{y_i});\,\,1\leq i\leq M\}$
essentially form a tree under inclusion. What does the structure of this tree say, if it says anything, about the function?
\item 
In Theorem \ref{thm:intro_thm} we used a single ``gauge function", $\phi$ for all the level sets $E_y$. Would allowing a $y$ dependence sharpen things? What about measurability issues?
\item Earlier we mentioned the logarithmic measure of $E$, namely $\sum l_k|\log{l_k}|$. {\sc Bary} \cite[p.\,421--428]{Bary:1964sj} showed that if an open subset of the circle has finite logarithmic measure, then the Fourier series of its characteristic function converges almost everywhere. The same is true of a function for which $E_y$ has finite logarithmic measure for a.e.\,$y$. Both conclusions are rendered obsolete by the Carleson--Hunt theorem. However, is there an analog of Bary's result for the spherical partial sums of characteristic functions of open subsets of the unit square?
\item For $d\geq2$ and a bounded open set $E\subset\R^d$, does the Whitney decomposition of $E$ give more precise estimates for $\tau(h,\vec{v},E)$ of \S5? For randomly chosen $\vec{v}$, what is the ``most likely" behaviour of $\tau(h,\vec{v},E)$ and how does it depend on $E$?
\end{enumerate}
\bibliography{lib_papers} 
\bibliographystyle{amsplain}

\end{document}